\numberwithin{equation}{section}
\newcommand{\h}{\mathcal{H}}
\newcommand{\E}{\hat{\mathbb{E}}}
\newcommand{\W}{\mathbb{W}}
\newcommand{\tr}{\operatorname{Tr}}
\newtheorem{proposition}{Proposition}[section]
\newtheorem{lemma}[proposition]{Lemma}
\newtheorem{theorem}[proposition]{Theorem}
\newtheorem*{theorem*}{Theorem}
\theoremstyle{definition}
\newtheorem{definition}[proposition]{Definition}
\newtheorem{remark}[proposition]{Remark}
\begin{document}

\title[$G$-Gaussian random fields and stochastic quantization]
{$G$-Gaussian random fields and stochastic quantization under nonlinear expectation}
\author[H. Hu]{Haoran Hu}
\address{School of Mathematics, Shandong University, Jinan, China.}
\email{201900100081@mail.sdu.edu.cn}

\date{}
\begin{abstract}
We investigate the application of Parisi-Wu stochastic quantization to the construction of random fields within the sublinear expectation framework. Using the semigroup approach and the infinite dimensional $G$-Ornstein Uhlenbeck process, we derive the unique mild solution to the robust Langevin dynamics of bosonic free field --- a parabolic linear stochastic partial differential equation (SPDE) driven by cylindrical $G$-Brownian motions. Mimicking the linear expectation case, we show the equilibrium distribution of the mild solution is the sublinear expectation analog of the massive Gaussian free field.
\end{abstract}
\maketitle

\setcounter{tocdepth}{1}
%\tableofcontents
\setcounter{tocdepth}{3}

%%%%%%%%%%%%%%%%%%%%%%%%%%%%%%%%%%%%%%%%%%%%%%%%%%%%%%%%%%%%%%%%  1d section
%%%%%%%%%%%%%%%%%%%%%%%%%%%%%%%%%%%%%%%%%%%%%%%%%%%%%%%%%%%%%%%%
%%%%%%%%%%%%%%%%%%%%%%%%%%%%%%%%%%%%%%%%%%%%%%%%%%%%%%%%%%%%%%%%
%%%%%%%%%%%%%%%%%%%%%%%%%%%%%%%%%%%%%%%%%%%%%%%%%%%%%%%%%%%%%%%%
%%%%%%%%%%%%%%%%%%%%%%%%%%%%%%%%%%%%%%%%%%%%%%%%%%%%%%%%%%%%%%%%
%%%%%%%%%%%%%%%%%%%%%%%%%%%%%%%%%%%%%%%%%%%%%%%%%%%%%%%%%%%%%%%%
%%%%%%%%%%%%%%%%%%%%%%%%%%%%%%%%%%%%%%%%%%%%%%%%%%%%%%%%%%%%%%%%
%%%%%%%%%%%%%%%%%%%%%%%%%%%%%%%%%%%%%%%%%%%%%%%%%%%%%%%%%%%%%%%%

%\tableofcontents

\section{Introduction and main result} 

The theory of nonlinear expectation is a significant probabilistic tool in the study of random models with uncertainty. Indeed, in research areas such as mathematical economics and stochastic finance, the randomness is typically characterised by a family of probability measures $\{\mathbb{P}_{\theta}\}_{\theta\in\Theta}$ and observers are usually not able to determine the true distribution. In 1981, Huber \cite{huber} proposed the upper expectation $\E(X):=\sup_{\theta\in\Theta}\mathbb{E}_{\theta}(X)$ to calculate the robust statistics of such models, and it is obvious that this upper expectation functional $\E$ is nonlinear unless $\Theta$ is a singleton. Similarly, Walley \cite{walley} developed the notion of upper prevision, which is closely related to coherent risk measures developed in latter works by Artzner-Delbaen-Eber-Heath \cite{artzner}, Delbaen \cite{delbaen}, Föllmer and Schied \cite{FS}. From the stochastic analysis perspectives, Peng \cite{Peng97} first discovered that the solution to backward stochastic differential equations (BSDEs) naturally contains the $g$-expectations, which is a special example in the family of $G$-expectations. This family of backward-in-time $g$-expectations has important applications (e.g., see \cite{Peng12}) in the pricing mechanics of financial markets. Motivated by these mathematical financial models with uncertainty and the theory of BSDEs, Peng \cite{Peng07} systematically established the unified theory of nonlinear expectation and the stochastic calculus of $G$-Brownian motions.

In recent works by Ji and Peng \cite{JP18,JP24}, they developed rigorous construction for the $G$-Gaussian random fields, and the solution of stochastic heat equations (SHE) driven by multiplicative $G$-Gaussian noises. By Kolmogorov extension, they established the distribution of a random field by specifying a family of compatible finite dimensional sublinear expectations. Apart from this direct method, one can also construct random fields implicitly using the Parisi-Wu stochastic quantization approach \cite{DH,PW}. In the context of Euclidean quantum field theory (EQFT), this approach suggests that the path integral measure can be realized as the invariant distribution of the corresponding Langevin dynamics, which is described by parabolic stochastic partial differential equations (SPDEs) driven by spacetime Gaussian white noise.

For example, consider a bosonic free field propagating through Euclidean spacetime. It is well-known (see e.g., \cite{LS}) that the massless path integral measure -- Gaussian free field is the unique invariant distribution of the SHE dynamics. More recently, several works \cite{AK,GH,MW,CCHSa,CCHSb} have developed rigorous mathematical approaches to the stochastic quantization of interacting fields and gauge theory in dimensions $d=1,2,3$, under various interesting physical settings. However, these SPDEs are typically singular and non-trivial, requiring technical renormalization (e.g., the regularity structure introduced by Hairer \cite{H14}). In this article, we only investigate the extension of dynamical bosonic free fields to the sublinear expectation case. Unlike \cite{JP24}, where the SHE was solved using martingale measure theory and infinite dimensional stochastic analysis, our main result employs the semigroup approach and demonstrates that the massive $G$-Gaussian free field is the unique equilibrium distribution of the dynamics. Moreover, we expect a general theory of robust stochastic dynamics which serves as a useful tool to analyze a wide class of random fields under nonlinear expectation.

\begin{theorem}\label{main}
    Fix dimension $d\geq1$, variance regime $0<\underline{\sigma}^{2}\leq\bar{\sigma}^{2}$ and mass $m\geq0$. Let $D\subset\mathbb{R}^{d}$ be a bounded domain, and $\{\W(t)\}_{t\ge0}$ be a cylindrical $G$-Brownian motion \eqref{expand} on $L^{2}(D)$ such that for any $n\in\mathbb{N}$, the coefficient $\W_{n}(t)$ is an $N(0,t[\underline{\sigma}^{2},\bar{\sigma}^{2}])$ distributed $G$-Brownian motion (see Section~\ref{pre} for definition). Suppose $\phi(x,t)$, $(x,t)\in D\times\mathbb{R}_{+}$ is the mild solution \eqref{duhamel} to the stochastic reaction-diffusion equation:
    \begin{equation}\label{reac}
        d\phi(x,t)=(\Delta-m^{2})\phi(x,t)dt+d\W(t),\quad \phi(x,0)\in C_{0}^{\infty}(D),\;\phi(x,t)\vert_{x\in \partial D}=0.
    \end{equation}
    Then as $t\to\infty$, $\phi(x,t)$ converges in law (see \eqref{wkconv}) to the massive $G$-Gaussian free field $\Psi(x)$ with Dirichlet boundary condition (refer to Definition~\ref{GGFF}). More precisely, there exists a constant $\alpha(d,D,m)>0$ such that $\Psi$ is a centered $G$-Gaussian stochastic process indexed by Sobolev space $H^{-1}_{0}(D)$, and for any $f,g\in H^{-1}_{0}(D)$, we have
    \begin{equation}\label{1}
        \E\big([\Psi,f][\Psi,g])\leq\frac{\bar{\sigma}^{2}}{2\alpha}\iint_{D^{2}}G_{m}(x,y)f(x)g(y)dxdy,
    \end{equation}
    \begin{equation}\label{2}
        -\E\big(-[\Psi,f][\Psi,g])\geq\frac{\underline{\sigma}^{2}}{2\alpha}\iint_{D^{2}}G_{m}(x,y)f(x)g(y)dxdy.
    \end{equation}
    In the above covariance bounds, $[\cdot,\cdot]$ is an $L^{2}$ bilinear form \eqref{the[]} and $G_{m}$ is the Green's function of the differential operator $(2\pi^{-1})(-\Delta+m^{2})$.
\end{theorem}

In the linear expectation case, the $2$-dimensional massless Gaussian free field (GFF) is a canonical random generalized function $\Psi(z)$, which can be understood as a 2-dimensional time analogue of the Brownian motion. It also arises from many random interface models and exhibits strong conformal symmetry, see \cite{sheffield,WP}. Moreover, for planar domains $D\subset\mathbb{R}^{2}$, the family of random measures $\exp[\gamma \Psi(z)]dz$ with $\gamma\in[0,\sqrt{2})$ provides the probabilistic interpretation \cite{BP} of the Liouville quantum gravity (LQG). From the stochastic quantization perspective, Garban \cite{garban} followed the work of David-Kupiainen-Rhodes-Vargas \cite{DKRV} and constructed the dynamics for Liouville conformal field theory (LCFT) on the sphere $\mathbb{S}^{2}$ and on the torus $\mathbb{T}^{2}$ respectively. The stochastic quantization for general EQFT is also understood as the the continuum scaling limit of the Markovian dynamics for discrete lattice spin systems. For the Glauber and Kawasaki dynamics of lattice models, another interesting research topic is to quantify the convergence speed to the equilibrium distribution. In particular, the spectral gap estimates and the log-Sobolev inequality were shown to be crucial in studying such dynamical behaviors, see \cite{GZ, martinelli} for further introduction.

The following context will be divided into two parts. In Section~\ref{white}, we will review the theory of $G$-Gaussian random fields and explain the stochastic analysis for cylindrical $G$-Brownian motions. Section~\ref{sq} surveys the rigorous structure of the robust Langevin dynamics of massive bosonic free field, meanwhile completing the proof of our main result. Last but not least, the author wish to thank Prof. Shige Peng for his support and many enlightening discussions. This paper was written while the author was an undergraduate student at Shandong University.

\section{Analysis of $G$-Gaussian white noise}
\label{white}

\subsection{Preliminaries on nonlinear expectation}
\label{pre}
In this subsection, we briefly recall the rigorous setting of nonlinear expectation and list a few important theorems that will be needed in the following sections. For detailed explanation, see \cite{JP18,Peng19,Peng05}.

A nonlinear expectation space refers to a triple $(\hat{\Omega},\hat{\h},\E)$, where $\hat{\Omega}$ is a set denoting the sample space, $\hat{\h}$ is a real vector space of functions (i.e., random variables) on $\hat{\Omega}$ such that $c\in\hat{\h}$ and $\vert X\vert\in\hat{\h}$ for all $c\in\mathbb{R}$ and $X\in\hat{\h}$. The third ingredient $\E$ is a nonlinear functional from $\hat{\h}$ to $\mathbb{R}$ which satisfies two properties: (i) Monotonicity: For any $X,Y\in\hat{\h}$ such that $X\geq Y$, we have $\E(X)\geq\E(Y)$; (ii) Constant preserving: $\E(c)=c$. If $\E$ satisfies two additional properties: (iii) Subadditivity: $\E(X+Y)\leq\E(X)+\E(Y)$, $\forall X,Y\in\hat{\h}$ and (iv) Positive homogeneity: $\E(\lambda X)=\lambda\E(X)$ for each $\lambda\geq0$, it is then called a sublinear expectation. In our context, we are mostly interested in sublinear expectations $\E$ with an additional regularity: for any sequence of non-increasing random variables $\{X_{n}\}_{n=1}^{\infty}\subset\hat{\h}$, such that $X_{n}(\omega)\downarrow0$ for all $\omega\in\hat{\Omega}$, we get $\E(X_{n})\downarrow0$ as $n\to\infty$. One easily finds that measure theoretic probability is just a special case with linear expectation.

Similar to the idea of weak topology on the space of all probability measures, we can study the distribution of a random variable $X\in\hat{\h}$ (or a random vector $(X_{1},...,X_{d})\subset\hat{\h}$) by testing it against nice functions. Let $C_{l,Lip}(\mathbb{R}^{d})$ denote the real linear space of functions $\varphi$ with regularity:
\begin{equation}\label{lipschitz}
    \vert\varphi(x)-\varphi(y)\vert\leq C(1+\vert x\vert^{m}+\vert y\vert^{m})\vert x-y\vert,\quad\forall x,y\in\mathbb{R}^{d}.
\end{equation}
The constants $C>0$ and $m\in\mathbb{N}$ are independent of $x$ and $y$ but dependent on $\varphi$. For a given $d$-dimensional random vector $X=(X_{1},...,X_{d})$, the distribution is defined to be the triple $(\mathbb{R}^{d},C_{l,Lip}(\mathbb{R}^{d}),\mathbb{F}_{X})$ such that $\mathbb{F}_{X}(\varphi):=\E(\varphi(X))$ for all $\varphi\in C_{l,Lip}(\mathbb{R}^{d})$. According to this identification, we immediately see that the distribution is nothing but a new well-defined sublinear expectation space with Lipschitz random variables, which are much easier to work with. Given two random vectors of the same dimension $X,Y$, they are called identically distributed iff $\mathbb{F}_{X}=\mathbb{F}_{Y}$. Furthermore, suppose $X$ is $m$-dimensional and $Y$ is $n$-dimensional, $X,Y$ are said to be independent iff for any $\varphi\in C_{l,Lip}(\mathbb{R}^{m+n})$, we have
\begin{equation}
\label{independent}
    \mathbb{F}_{(X,Y)}(\varphi)=\E(\E(\varphi(x,Y))_{x=X}).
\end{equation}
Following this idea, we can construct i.i.d copies of sublinear expectations by defining a product space $(\hat{\Omega}^{\otimes n},\hat{\h}^{\otimes n},\E^{\otimes n})$. In particular, the space $\hat{\Omega}^{\otimes n}$ is nothing but the Cartesian product. The set of all random variables $\hat{\h}^{\otimes n}$ on this product space is given by
\begin{equation*}
   \hat{\h}^{\otimes n}:=\{\varphi(X_{1},...,X_{n});\forall \varphi\in C_{l,Lip}(\mathbb{R}^{n}),X_{i}\in\hat{\h}_{i},i=1,...,n\}.
\end{equation*}
The new multi-dimensional sublinear expectation $\E^{\otimes n}$ is defined inductively by 
\begin{equation*}
    \E^{\otimes n}(\varphi(X_{1},...,X_{n}))=\E(\E^{\otimes n-1}(\varphi(x_{1},X_{2}...,X_{n}))_{x_{1}=X_{1}}).
\end{equation*}
One can extend such definition to an infinite product. It was shown that an analogues central limit theorems also applies to certain sequences of i.i.d random vectors with sublinear expectation. The universality classes are characterized by the $G$-maximal distributions and the $G$-Gaussian distributions see Definition~\ref{Ggaussian} and \cite[Chapter~2]{Peng19} for further details. 
Finally, a sequence of random variables $\{X_{n}\}_{n=1}^{\infty}$ is said to converge in law to $X$ as $n\to\infty$ iff 
\begin{equation}
\label{wkconv}
\lim_{n\to\infty}\mathbb{F}_{X_{n}}(\varphi)=\mathbb{F}_{X}(\varphi)
\end{equation} 
holds for any bounded Lipschitz test function $\varphi$.

In fact, a sublinear expectation can be encoded by a family of probability measures on $(\hat{\Omega},\sigma(\hat{\h}))$. This follows easily from an application of the Daniell-Stone theorem (e.g., see \cite{cohn}) and one can henceforth develop the idea of capacity and quasi-sure analysis for random variables. Once again, we see that if the set of uncertain probability measures is just a singleton, the entire structure reduces to linear expectation.
\begin{theorem}[Robust Daniell-Stone]
\label{RDS}
    Suppose $\E$ is a sublinear expectation on $(\hat{\Omega},\hat{\h})$ which satisfies the regularity property, then there exists a weakly compact set of probability measures $\Theta$ on the $\sigma$-algebra $\sigma(\hat{\h})=\sigma(\{X^{-1}(A);X\in\hat{\h},A\in\mathcal{B}(\mathbb{R})\})$ such that
    \begin{equation*}
        \E(X)=\max_{\mathbb{P}\in\Theta}\int_{\Omega}X(\omega)d\mathbb{P},\quad\forall X\in\hat{\h}.
    \end{equation*}
    In addition, if we define a function $c:\sigma(\hat{\h})\to\mathbb{R}_{+}$ by 
    \begin{equation*}
        c(A)=\sup_{\mathbb{P}\in\Theta}\mathbb{P}(A),\quad\forall A\in\sigma(\hat{\h}),
    \end{equation*}
    then $c$ is a Choquet capacity and any two random variables $X$ and $Y$ are said to agree qausi-surely (denoted by q.s.) iff $c(X\neq Y)=0$.
\end{theorem}

The theory on stochastic processes and random fields (i.e., a family of random vectors $X_{\gamma}$ indexed by a parameter $\gamma\in\Gamma$) with nonlinear expectation will be the building block for stochastic analysis with uncertainty. To study Wiener process and white noise, we need to introduce the sublinear expectation analog of the $d$-dimensional centered Gaussian distribution. 
\begin{definition}
\label{Ggaussian}
    A $d$-dimensional random vector $X=(X_{1},...,X_{d})$ is said to be centered $G$-Gaussian distributed if
    \begin{equation*}
        aX+b\bar{X}\overset{d}{=}\sqrt{a^{2}+b^{2}}X,\quad\forall a,b\geq0.
    \end{equation*}
    where $\bar{X}$ is an independent copy of X.
\end{definition}

This definition is somewhat misleading because it doesn't manifest the idea of $G$. Indeed, the most important characterization of the distribution $\mu_{X}$ of a centered Gaussian random vector $X$ is the covariance matrix, which is equivalent to the functional $G(Q):=\mathbb{E}(X^{T}QX)$ on the space of all $d\times d$ real symmetric matrices $Q\in\mathbb{S}(d)$. In our case, we would expect $G$ to be sublinear and to uniquely characterize the desired distribution. Moreover, the distribution of $G$-Gaussian random vector is strongly related to the viscosity solution of the nonlinear heat equation: $\partial_{t}u-G(\nabla^{2}u)=0$. This leads to many interesting consequences. For example, the idea of viscosity solutions to nonlinear PDEs applies to the study of HJB equations, which connects to various results in stochastic control problems, for example, see \cite{YZ}. We summarize the above heuristics of $G$-Gaussian distribution in the following theorem.

\begin{theorem}
    A $d$-dimensional centered $G$-Gaussian distribution is uniquely characterized by the covariance functional $G=G_{X}:\mathbb{S}(d)\to\mathbb{R}$ given by
    \begin{equation}
    \label{cov}
        G_{X}(Q):=\frac12\E(\langle QX,X\rangle),\quad Q\in\mathbb{S}(d).
    \end{equation}
    Two centered $G$-Gaussian random vectors are identically distributed iff their covariance functionals coincide. Conversely, for any given sublinear functional $G:\mathbb{S}(d)\to\mathbb{R}$ with monotonicity, constant preserving, subadditivity and positive homogeneity properties, there exists a unique (up to identical distributions) $d$-dimensional centered $G$-Gaussian random vector $X$ satisfying \eqref{cov}. 
\end{theorem}

As a direct corollary of Theorem~\ref{RDS}, the covariance functional \eqref{cov} is alternatively given in a variational formula:
\begin{equation}
    G_{X}(Q)=\frac12 \sup_{S\in\Theta}\tr(QS),
\end{equation}
where $\Theta$ is a closed bounded convex subset of $\mathbb{S}(d)$. In the $1$-dimensional case, it is a compact positive interval which is strictly bounded away from $0$ and infinity, i.e., $\Theta=[\underline{\sigma}^{2},\bar{\sigma}^{2}]$, $0<\underline{\sigma}\leq\bar{\sigma}$, and the corresponding nonlinear heat equation is known as the Barenblatt equation. Simple calculations yields 
\begin{equation}
\label{covariance-bd}
\E(X^{2})=\bar{\sigma}^{2},\quad-\E(-X^{2})=\underline{\sigma}^{2}.
\end{equation}
Hence the interval $\Theta$ can be naturally understood as the variance regime of the centered random variable: $X\sim N(0,[\underline{\sigma}^{2},\bar{\sigma}^{2}])$. We are only interested in the centered $G$-Gaussian distributions, thus `centered' will be omitted in the following contexts for brevity. At $d=1$, we can compute the $G$-Gaussian distributions by proving the covariance bounds \eqref{covariance-bd}.

A random field $(X_{\gamma})_{\gamma\in\Gamma}$ is called $G$-Gaussian distributed if for any $\underline{\gamma}=(\gamma_{1},...,\gamma_{n})\subset\Gamma$ and $n\geq1$, the finite-dimensional distribution of $(X_{\gamma_{1}},...,X_{\gamma_{n}})$ is $n$-dimensional $G$-Gaussian. If the parameter set is the temporal regime $\Gamma=[0,\infty)$, there exists a 1-dimensional canonical continuous Lévy process --- $G$-Brownian motion  $(B_{t})_{t\ge0}$, which is defined to satisfy: (i) $B_{0}=0$; (ii) For any $t\ge s\ge0$, the increment $B_{t}-B_{s}$ is $N(0,(t-s)[\underline{\sigma}^{2},\bar{\sigma}^{2}])$ distributed and is independent of the past $(B_{r})_{0\le r\le s}$. This definition is consistent with the classical ones in the sense that Itô's stochastic analysis with uncertainty can be similarly developed, see \cite{Peng19}. However, by Definition~\ref{Ggaussian}, the $G$-Brownian motion is not a $G$-Gaussian process. We postpone the discussion of this non-$G$-Gaussian anomaly in Section~\ref{sec:white}.

In this paper, our study on stochastic PDEs driven by $G$-Gaussian random noise is based on the following key result, which is essentially a Kolmogorov extension theorem that holds for arbitrary random fields with nonlinear expectation.
\begin{theorem}[\cite{JP18}]
\label{compactibility}
    Let $\mathcal{J}_{\Gamma}$ denote the family of all finite indices of the form $\underline{\gamma}=(\gamma_{1},...,\gamma_{n})\subset\Gamma$, and suppose $\{\mathbb{F}_{\underline{\gamma}},\underline{\gamma}\in\mathcal{J}_{\Gamma}\}$ is a family of finite-dimensional distributions satisfying the following conditions:
    \begin{enumerate}[(i)]
        \item $\mathbf{Compatibility}$: For any $(\gamma_{1},...,\gamma_{n},\gamma_{n+1})\in\mathcal{J}_{\Gamma}$ and $\varphi\in C_{l,Lip}(\mathbb{R}^{n})$, we have
        \begin{equation*}
            \mathbb{F}_{(\gamma_{1},...,\gamma_{n})}(\varphi)=\mathbb{F}_{(\gamma_{1},...,\gamma_{n+1})}(\tilde{\varphi})
        \end{equation*}
        where $\tilde{\varphi}(x_{1},...,x_{n},x_{n+1})=\varphi(x_{1},...,x_{n})$.
        \item $\mathbf{Symmetry}$: For any permutation $\pi$ of $\{1,...,n\}$, we have
        \begin{equation*}
            \mathbb{F}_{(\gamma_{\pi(1)},...,\gamma_{\pi(n)})}(\varphi)=\mathbb{F}_{(\gamma_{1},...,\gamma_{n})}(\varphi)
        \end{equation*}
        for any $\varphi\in C_{l,Lip}(\mathbb{R}^{n})$ and $(\gamma_{1},...,\gamma_{n})\in\mathcal{J}_{\Gamma}$.
    \end{enumerate}
    Then there exists a sublinear expectation space $(\hat{\Omega},\hat{\h},\E)$ and a random field $(X_{\gamma})_{\gamma\in\Gamma}$ defined on this space such that the finite-dimensional distribution of $(X_{\gamma})_{\gamma\in\Gamma}$ coincide with $\{\mathbb{F}_{\underline{\gamma}},\underline{\gamma}\in\mathcal{J}_{\Gamma}\}$. Moreover, such random fields are uniquely determined in law.
\end{theorem}
According to this result, to construct a $G$-Gaussian field, we only need to specify a compatible family of sublinear covariance functionals $\{G_{\underline{\gamma}};\underline{\gamma}\in\mathcal{J}_{\Gamma}\}$.
However, due to nonlinearity of the expectation, we still lose some useful characterizations such as the Cameron-Martin spaces of Guassian measures. Thus in the sublinear expectation case, several additional modifications are needed. For simplicity of notations in the following contexts, we sometimes write $X\lesssim Y$ (or $X\gtrsim Y$) as shorthand for the inequality $X\leq cY$ ($X\geq cY$) for some constant $c>0$.

\subsection{$G$-Gaussian white noise}
\label{sec:white}

From now on, we always assume a priori that $(\mathbf{H},\langle\cdot,\cdot\rangle)$ is a real separable Hilbert space. Under the framework of probability theory \cite{AO,hairer,oksendal}, the Gaussian white noise is a linear isometry from $\mathbf{H}$ to the Gaussian subspace of $L^{2}(\Omega)$. However, under sublinear expectation, it is impossible to construct a $G$-Gaussian white noise $(\mathbb{W}_{h})_{h\in\mathbf{H}}$ such that $\mathbb{W}_{h}$ and $\mathbb{W}_{k}$ are independent whenever $h\perp k$. Indeed, if $W_{1}$ and $W_{2}$ are i.i.d. $N(0,[\underline{\sigma}^{2},\bar{\sigma}^{2}])$ distributed, the couple $(W_{1},W_{2})$ is not even a $2$-dimensional $G$-Gaussian vector (see \cite[Excercise~2.5.1]{Peng19}). Hence we can only define it through covariance functions and apply Kolmogorov extension. Assume $\underline{h}:=(h_{1}, ..., h_{n})\in\mathcal{J}_{\mathbf{H}}$, define a matrix subset by 
\begin{equation}\label{matrix}
\Theta=\begin{pmatrix}\langle h_{1}, h_{1}\rangle\theta&\langle h_{1}, h_{2}\rangle\theta&\cdot\cdot\cdot&\langle h_{1}, h_{n}\rangle\theta\\
\vdots&\vdots&\vdots&\vdots\\
\langle h_{n}, h_{1}\rangle\theta&\langle h_{n}, h_{2}\rangle\theta&\cdot\cdot\cdot&\langle h_{n}, h_{n}\rangle\theta\\ \end{pmatrix}\subset\mathbb{S}(n),
\end{equation}
where $\theta$ is a coefficient satisfying $0<\underline{\sigma}^{2}\le\theta\le\bar{\sigma}^{2}$. Since $\Theta$ is a continuous injection from $[\underline{\sigma}^{2}, \bar{\sigma}^{2}]$ to $\mathbb{S}(n)$, it's naturally a closed bounded convex subset of $\mathbb{S}(n)$ and the function
\begin{equation}\label{G}
G_{\underline{h}}(A):=\frac{1}{2}\displaystyle\sup_{\theta\in[\underline{\sigma}^{2}, \bar{\sigma}^{2}]}\tr(AB),\quad B\in\Theta
\end{equation}
is a well-defined sublinear functional on $\mathbb{S}(n)$ and therefore we have a $G_{\underline{h}}$-Gaussian distributed random vector $(\W_{h_{1}},...,\W_{h_{n}})$. The following lemma and Theorem~\ref{compactibility} together imply that there exists a $G$-Gaussian random field $\W=(\W_{h})_{h\in\mathbf{H}}$ with finite dimensional distribution characterised by the family $(G_{\underline{h}})_{\underline{h}\in\mathcal{J}_{\mathbf{H}}}$. The proof is nothing but direct verification, and the resulting random field $\W$ is called a $G$-white noise on $\mathbf{H}$.

\begin{lemma}
The family of functions $\{G_{\underline{h}}\vert\;\forall \underline{h}=(h_{1}, ..., h_{n})\in\mathbf{H}\}$ exhibits compatibility in the following sense:
\begin{enumerate}[(i)]
    \item  For any $A\in\mathbb{S}(n)$, $\underline{h}\in\mathcal{J}_{\mathbf{H}}$ and $h_{n+1}\in\mathbf{H}$, we have $G_{h_{1}, ..., h_{n+1}}\begin{pmatrix}A&0\\0&0\\ \end{pmatrix}=G_{h_{1}, ..., h_{n}}(A)$.

 \item For any permutation $\sigma$ of $n$ elements and any $A=(a_{ij})\in\mathbb{S}(n)$, 
$$
G_{h_{\sigma(1)}, ..., h_{\sigma(n)}}(a_{ij})=G_{h_{1}, ..., h_{n}}(a_{\sigma^{-1}(i)\sigma^{-1}(j)}).
$$
\end{enumerate}
\end{lemma}

To introduce our first result, we need an analogous notion of the $L^{p}$ spaces. Fix any $p\ge1$, it is apparent that $\hat{\h}_{0}^{p}:=\{X\in\hat{\h};\E(\vert X\vert^{p})=0\}$ is a linear subspace and we are able to introduce the quotient $\hat{\h}^{p}:=\hat{\h}/\hat{\h}_{0}^{p}$. By setting $\Vert\hat{X}\Vert_{p}:=\E(\vert X\vert^{p})^{1/p}$ for any equivalence class $\hat{X}$, we obtain a well-defined norm $\Vert\cdot\Vert_{p}$, and the completion of $\hat{\h}^{p}$ under this norm is a Banach space $\tilde{\h}^{p}$. Even though this completion is mostly nontrivial, one can still continuously extend the domain of the sublinear expectation functional $\E$ from $\hat{\h}$ to all of $\tilde{\h}^{p}$. 

\begin{theorem}\label{dist}
Let $\W$ be the $G$-white noise on $\mathbf{H}$ with distribution specified by \eqref{matrix} and \eqref{G}, then for any two vectors $h,k\in\mathbf{H}$ with $\langle h, k\rangle\ge0$, one has
\begin{equation}\label{11}
        \W_{h}\backsim N(0, \Vert h\Vert^{2}[\underline{\sigma}^{2}, \bar{\sigma}^{2}]),\quad\hat{\mathbb{E}}(\mathbb{W}_{h}\mathbb{W}_{k})=\langle h, k\rangle\bar{\sigma}^{2},\quad -\hat{\mathbb{E}}(-\mathbb{W}_{h}\mathbb{W}_{k})=\langle h, k\rangle\underline{\sigma}^{2}.
\end{equation}
Moreover, suppose $\{e_{i},i\in\mathbb{N}_{+}\}$ is an orthonormal basis of $\mathbf{H}$, and $h=\sum_{i\ge1}h_{i}e_{i}:=\sum_{i\ge1}\langle h,e_{i}\rangle e_{i}$, we have the distributional identity 
\begin{equation}\label{22}
\sum_{i=1}^{\infty}h_{i}\mathbb{W}_{e_{i}}\overset{d}{=}\mathbb{W}_{h},
\end{equation}
where the series on the left converges in Banach spaces $(\tilde{\mathcal{H}}^{n}, \lVert \cdot\rVert_{n})$ for any $ n\in\mathbb{N}_{+}$. 

\end{theorem}

\begin{remark}
This result is a very basic characterization of the distribution of $G$-white noise, and as we can see, due to the uncertainty of variance, we only have preservation of inner product \eqref{11} in a weaker sense. However, the linearity of $h\mapsto \W_{h}$ and Fourier expansion \eqref{22} still holds in the common $L^{2}$ sense, which means, in order to construct the distribution of an arbitrary $\W_{h}$, one only need to specify a sequence of standard $G$-Gaussians $\{\W_{e_{i}}\}_{i\ge1}$. 
\end{remark}

\begin{proof}[Proof of Theorem~\ref{dist}]
The distribution of $\W_{h}$ is immediate from the definition of $G_{h}$: 
\begin{equation*}
G_{h}(a)=\frac{1}{2}\displaystyle\sup_{\theta\in[\underline{\sigma}^{2}, \bar{\sigma}^{2}]}a\Vert h\Vert^{2}\theta=\frac{1}{2}\big[(a\vee0)\Vert h\Vert^{2}\bar{\sigma}^{2}-(a\wedge0)\Vert h\Vert^{2}\underline{\sigma}^{2}\big],\quad\forall a\in\mathbb{R}.
\end{equation*}
For the covariance, take $A=\begin{pmatrix}0&1\\1&0\\ \end{pmatrix}$, we have two identities
\begin{align*}
G_{h,k}(A)
&=\frac{1}{2}\displaystyle\sup_{\theta\in[\underline{\sigma}^{2}, \bar{\sigma}^{2}]}\tr\begin{pmatrix}0&1\\1&0\\ \end{pmatrix}\begin{pmatrix}\Vert h\Vert^{2}\theta&\langle h,k\rangle\theta\\\langle h,k\rangle\theta&\Vert k\Vert^{2}\theta\\ \end{pmatrix}=\langle h, k\rangle\bar{\sigma}^{2},\\
&G_{h,k}(A)=\frac{1}{2}\hat{\mathbb{E}}(A\mathbb{W}_{h,k},\mathbb{W}_{h,k})=\hat{\mathbb{E}}(\mathbb{W}_{h}\mathbb{W}_{k}).
\end{align*}
In the above, $\mathbb{W}_{h,k}$ denotes the $2d$ $G$-Gaussian vector $(\mathbb{W}_{h},\mathbb{W}_{k})$. Similarly, $-\hat{\mathbb{E}}(-\mathbb{W}_{h}\mathbb{W}_{k})$ is calculated by substituting $-A$ for $A$.

For the proof of \eqref{22}, note that any finitely truncated expansion $\sum_{i=1}^{n}h_{i}\mathbb{W}_{e_{i}}$ is a $1d$ $G$-Gaussian distributed variable. It suffices to show the sequence $\sum^{n}_{i=1}h_{i}\mathbb{W}_{e_{i}},n\in\mathbb{N}_{+}$ is in fact Cauchy under all $L^{n}$ norms. The idea is to calculate the lower bounds and upper bounds for the uncertain absolute moments:
\begin{align*}
\lim\limits_{n,m\to\infty}\hat{\mathbb{E}}\Big\vert\sum_{i=n+1}^{m}h_{i}\mathbb{W}_{e_{i}}\Big\vert^{k}=\Bigg\{
           \begin{array}{cc}
               \lim\limits_{n,m\to\infty}\frac{2(k-1)!!\Vert\sum_{i=n+1}^{m}h_{i}e_{i}\Vert^{k}\bar{\sigma}^k}{\sqrt{2\pi}}, & \mathrm{k\:is\:odd}. \\
              \lim\limits_{n,m\to\infty}(k-1)!!\Vert\sum_{i=n+1}^{m}h_{i}e_{i}\Vert^{k}\bar{\sigma}^k,  & \mathrm{k\:is\:even}. 
           \end{array} 
\end{align*}
\begin{align*}
\lim\limits_{n,m\to\infty}-\hat{\mathbb{E}}\Big(-\Big\vert\sum_{i=n+1}^{m}h_{i}\mathbb{W}_{e_{i}}\Big\vert^{k}\Big)=\Bigg\{
           \begin{array}{cc}
               \lim\limits_{n,m\to\infty}\frac{2(k-1)!!\Vert\sum_{i=n+1}^{m}h_{i}e_{i}\Vert^{k}\underline{\sigma}^k}{\sqrt{2\pi}}, & \mathrm{k\:is\:odd}. \\
              \lim\limits_{n,m\to\infty}(k-1)!!\Vert\sum_{i=n+1}^{m}h_{i}e_{i}\Vert^{k}\underline{\sigma}^k,  & \mathrm{k\:is\:even}. 
           \end{array} 
\end{align*}
Indeed, this follows directly from an easy application of Robust Daniell-Stone (i.e., Theorem~\ref{RDS}). Since $\sum_{i\ge1}h_{i}e_{i}$ converges in the Hilbert space, we have 
$$
\lim_{n,m\to\infty}\hat{\mathbb{E}}\Big\vert\sum_{i=n+1}^{m}h_{i}\mathbb{W}_{e_{i}}\Big\vert^{k}=\lim_{n,m\to\infty}-\hat{\mathbb{E}}\Big(-\Big\vert\sum_{i=n+1}^{m}h_{i}\mathbb{W}_{e_{i}}\Big\vert^{k}\Big)=0.
$$
By straightforward modification, one obtains the Lyapunov's inequality for sublinear expectation, thus the limit random variables in $(\tilde{\mathcal{H}}^{n})_{n\ge1}$ must be identical. Let's denote this common limit by $\mathbb{X}$ and prove that it is identically distributed with $\mathbb{W}_{h}$. For any positive constant $a>0$, by convergence
$$
\frac{1}{2}\hat{\mathbb{E}}(a\mathbb{X}^{2})=\lim_{n\to\infty}\frac{1}{2}\hat{\mathbb{E}}\Big(a\big[\sum_{i=1}^{n}h_{i}\mathbb{W}_{e_{i}}\big]^{2}\Big)=\lim_{n\to\infty}\frac{1}{2}\Big((a\vee0)\sum_{i=1}^{n}h_{i}^{2}\bar{\sigma}^{2}-(a\wedge0)\sum_{i=1}^{n}h_{i}^{2}\underline{\sigma}^{2}\Big)=G_{h}(a).
$$
We are finally left to show the $G_{h}$-Gaussianity of $\mathbb{X}$. Assume $(\overline{\mathbb{W}}_{e_{i}})_{i\ge1}$ and $\overline{\mathbb{X}}$ are i.i.d. copies of $(\mathbb{W}_{e_{i}})_{i\ge1}$ and $\mathbb{X}$ respectively, we fix an arbitrary $\varphi\in C_{l,Lip}(\mathbb{R})$. By Lipschitz regularity \eqref{lipschitz}, there exits a positive exponent $m>0$ such that
\begin{align}\label{eq:lip}
\hat{\mathbb{E}}&\Big\vert\varphi\Big(\sum^{n}_{i=1}h_{i}\mathbb{W}_{e_{i}}+\sum^{n}_{i=1}h_{i}\overline{\mathbb{W}}_{e_{i}}\Big)-\varphi\Big(\mathbb{X}+\overline{\mathbb{X}}\Big)\Big\vert \notag\\
&\lesssim\hat{\mathbb{E}}\Big(1+\Big\vert\displaystyle\sum^{n}_{i=1}h_{i}\mathbb{W}_{e_{i}}+\sum^{n}_{i=1}h_{i}\overline{\mathbb{W}}_{e_{i}}\Big\vert^{m}+\Big\vert\mathbb{X}+\overline{\mathbb{X}}\Big\vert^{m}\Big)
  \Big(\Big\vert\displaystyle\sum^{n}_{i=1}h_{i}\mathbb{W}_{e_{i}}-\mathbb{X}\vert+\vert\sum^{n}_{i=1}h_{i}\overline{\mathbb{W}}_{e_{i}}-\overline{\mathbb{X}}\Big\vert\Big) \notag\\
  &\lesssim\Bigg[\hat{\mathbb{E}}\Big(1+\Big\vert\displaystyle\sum^{n}_{i=1}h_{i}\mathbb{W}_{e_{i}}+\displaystyle\sum^{n}_{i=1}h_{i}\overline{\mathbb{W}}_{e_{i}}\Big\vert^{m}+\Big\vert\mathbb{X}+\overline{\mathbb{X}}\Big\vert^{m}\Big)^{2}\hat{\mathbb{E}}
  \Big(\Big\vert\displaystyle\sum^{n}_{i=1}h_{i}\mathbb{W}_{e_{i}}-\mathbb{X}\Big\vert+\Big\vert\sum^{n}_{i=1}h_{i}\overline{\mathbb{W}}_{e_{i}}-\overline{\mathbb{X}}\Big\vert\Big)^{2}\Bigg]^{1/2}.
\end{align}
Once again, using the $L^{n}$ converges property proved above, the first sublinear expectation in \eqref{eq:lip} is bounded, meanwhile the second one converges to 0 as $n\to\infty$. Thus we have
\begin{equation}\label{33}
\hat{\mathbb{E}}\Big\vert\varphi\Big(\sum^{n}_{i=1}h_{i}\mathbb{W}_{e_{i}}+h_{i}\overline{\mathbb{W}}_{e_{i}}\Big)-\varphi(\mathbb{X}+\overline{\mathbb{X}})\Big\vert\longrightarrow0,\quad n\to\infty.
\end{equation}
Similarly, 
\begin{align}\label{44}
\hat{\mathbb{E}}&\Big\vert\varphi\Big(\sqrt{2}\sum^{n}_{i=1}h_{i}\mathbb{W}_{e_{i}}\Big)-\varphi(\sqrt{2}\mathbb{X})\Big\vert\notag\\
&\lesssim\hat{\mathbb{E}}\Big(1+\Big\vert\sum^{n}_{i=1}h_{i}\mathbb{W}_{e_{i}}\Big\vert^{m}+\vert\mathbb{X}\vert^{m}\Big)\Big(\Big\vert\sum^{n}_{i=1}h_{i}\mathbb{W}_{e_{i}}-\mathbb{X}\Big\vert\Big)\longrightarrow 0,\quad n\to\infty.
\end{align}
According to \cite[Exercise~2.5.9]{Peng19}, \eqref{33} and \eqref{44} together implies
\begin{equation*}
\hat{\mathbb{E}}\varphi(\mathbb{X}+\overline{\mathbb{X}})
=\lim\limits_{n\to\infty}\hat{\mathbb{E}}\varphi\Big(\sum^{n}_{i=1}h_{i}\mathbb{W}_{e_{i}}+\sum^{n}_{i=1}h_{i}\overline{\mathbb{W}}_{e_{i}}\Big)
=\lim\limits_{n\to\infty}\hat{\mathbb{E}}\varphi\Big(\sqrt{2}\sum^{n}_{i=1}h_{i}\mathbb{W}_{e_{i}}\Big)=\hat{\mathbb{E}}\varphi(\sqrt{2}\mathbb{X}).
\end{equation*}
This concludes that $\mathbb{X}$ is indeed $G_{h}$-Gaussian distributed.
\end{proof}

\begin{remark}\label{moment-method}
    The proof of Theorem~\ref{dist} shows a moment method for $G$-Gaussian random fields. One can determine the invariant distributions of a $G$-Gaussian dynamics by calculating the large time asymptotics of the absolute moments.
\end{remark}

\subsection{Cylindrical $G$-Brownian motion and stochastic integration}

Before we start constructing our spacetime random noise, a keen observer might notice that if we let $\mathbf{H}:=L^{2}(D\times[0,\infty))$, where $D\subset\mathbb{R}^{d}$ is a bounded domain, the $G$-white noise on $\mathbf{H}$ does not have temporal Markov property. As introduced in Section~\ref{pre}, the canonical continuous Lévy process with independent and stationary temporal increment is the $G$-Brownian motion. Using this result and mimicking \cite{walsh}, we first define the sublinear expectation analog of the cylindrical Wiener process, then the $G$-spacetime white noise constructed in \cite{JP18} can be informally realized as its weak temporal derivative.

A function $X:\;\hat{\Omega}\to\mathbf{H}$ is called an $\mathbf{H}$-valued random variable if and only if for any $h\in\mathbf{H}$, $\langle X,h\rangle\in\hat{\mathcal{H}}$, and the family indexed by time $X(t)$ is then called an $\mathbf{H}$-valued stochastic process. In particular, $X$ is called $G$-Gaussian distributed on $\mathbf{H}$ if for all $h\in\mathbf{H}$, any 1-dimensional projection $\langle X,h\rangle$ is a mean-zero $G$-Gaussian random variable. When $\mathbf{H}=L^{2}(D)$, another way to understand this $1d$ marginal distribution is that $h(x)\in L^{2}(D)$ is integrated against the random function $X$. Such stochastic integration is essentially equivalent to the isometry of white noise, since given Theorem~\ref{dist}, we know that both of them can be calculated using orthonormal basis. Roughly speaking, for any fixed time, cylindrical $G$-Brownian motion essentially reduces to a $G$-white noise parametrized by $L^{2}(D)$, and the study of its temporal part mimics the derivation of Itô's calculus discussed in \cite{Peng19}. To begin the rigorous analysis, we first observe the following simple fact, the proof of which is an easy exercise.

\begin{lemma}\label{lem}
Let $\{X_{n}\}_{n\ge1}, X\in\hat{\mathcal{H}}_{1}$ such that $\lim_{n\to\infty}\hat{\mathbb{E}}\vert X_{n}-X\vert=0$, then there exists a subsequence $\{X_{n_{i}};i\in\mathbb{N}_{+}\}\subset\{X_{n};n\in\mathbb{N}_{+}\}$ that converges quasi-surely (q.s.) to $X$ as $i\to\infty$. 
\end{lemma}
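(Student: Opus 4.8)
The plan is to transcribe the classical argument ``$L^1$ convergence forces a subsequence to converge almost surely'' into the sublinear setting, with the single reference measure replaced by the capacity
\[
c(\cdot) := \sup_{P\in\mathcal{P}} P(\cdot),
\]
where $\mathcal{P}$ is the family of (countably additive) probability measures representing $\hat{\mathbb{E}}$, whose existence is exactly the Robust Daniell--Stone representation already invoked in the proof of Theorem 2. Recall that $X_{n_i}\to X$ quasi-surely means that the convergence holds for every $\omega$ outside a polar set, i.e. a set $N$ with $c(N)=0$. Since $X_n,X\in\hat{\mathcal{H}}_1$, each set $\{|X_{n_i}-X|>2^{-i}\}$ is measurable, so $c$ applies to it.

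First I would record a Markov-type inequality at the level of capacity: for any $Y\in\hat{\mathcal{H}}_1$ and $\lambda>0$,
\[
c(|Y|>\lambda)=\sup_{P\in\mathcal{P}}P(|Y|>\lambda)\le\sup_{P\in\mathcal{P}}\frac{E_{P}|Y|}{\lambda}=\frac{\hat{\mathbb{E}}|Y|}{\lambda},
\]
using Markov's inequality under each fixed $P$ followed by the representation $\sup_{P}E_{P}|Y|=\hat{\mathbb{E}}|Y|$. Invoking the hypothesis $\hat{\mathbb{E}}|X_n-X|\to0$, I would then thin the sequence out fast enough, choosing $n_1<n_2<\cdots$ with $\hat{\mathbb{E}}|X_{n_i}-X|\le 4^{-i}$. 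Setting $A_i:=\{|X_{n_i}-X|>2^{-i}\}$, the bound above yields $c(A_i)\le 2^{i}\hat{\mathbb{E}}|X_{n_i}-X|\le 2^{-i}$, so that $\sum_{i}c(A_i)<\infty$.

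Next I would run a Borel--Cantelli argument using the countable subadditivity of the capacity. Writing $N:=\limsup_i A_i=\bigcap_{k\ge1}\bigcup_{i\ge k}A_i$, subadditivity gives $c(N)\le\sum_{i\ge k}c(A_i)\to0$ as $k\to\infty$, hence $c(N)=0$ and $N$ is polar. For each $\omega\notin N$ there is $k(\omega)$ with $\omega\notin A_i$ for all $i\ge k(\omega)$, i.e. $|X_{n_i}(\omega)-X(\omega)|\le 2^{-i}$ for all large $i$, so $X_{n_i}(\omega)\to X(\omega)$. Therefore $X_{n_i}\to X$ quasi-surely, as claimed.

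The main obstacle is not the Borel--Cantelli bookkeeping but securing the two structural facts about $c$ on which it rests: that $\hat{\mathbb{E}}$ genuinely admits the representation $\hat{\mathbb{E}}[\cdot]=\sup_{P\in\mathcal{P}}E_{P}[\cdot]$ (so that $c$ is well defined and satisfies the Markov bound), and that the induced $c$ is \emph{countably} subadditive rather than merely finitely subadditive. The latter is the delicate point, since it is equivalent to being able to take the representing family $\mathcal{P}$ weakly compact/tight, as in the Denis--Hu--Peng construction of the $G$-framework. Once countable subadditivity of $c$ is cited from that underlying theory (or derived from tightness of $\mathcal{P}$), the remaining steps above are routine.
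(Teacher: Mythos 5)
Your argument is correct and follows essentially the same route as the paper: a Chebyshev-type bound for the upper capacity obtained by taking the supremum over the representing family, extraction of a subsequence with summable capacities, and a Borel--Cantelli conclusion $c(\limsup_i A_i)=0$. The only difference is cosmetic: where you explicitly flag countable subadditivity of $c$ as the point needing justification (via tightness of $\mathcal{P}$ \`a la Denis--Hu--Peng), the paper simply cites Theorem 6.1.4 of \cite{ref14} for that step.
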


Due to the divergence of variance, a cylindrical $G$-Brownian motion on $\mathbf{H}$ is in fact taking values in a larger Hilbert space $\mathbf{H}'$ via a densely defined embedding $\iota:\mathbf{H}\hookrightarrow\mathbf{H}'$. In the linear expectation case \cite{hairer}, $\mathbf{H}'$ typically lies in the family of interpolation spaces and its precise structure is sometimes irrelevant because infinite dimensional Gaussian measures are uniquely characterised by the Cameron-Martin spaces. There's a similar trouble for the Gaussian free field and in $2d$, this random generalized function is taking values in Sobolev spaces $H^{-s}(D)$, $s>0$. The ambient space $\mathbf{H}'$ does tell us about the spatial regularity of the random field, but before this, let's show the existence by a simple calculation. Since the embedding $\iota$ only depends on the choice of the sequence $a\in\ell^{2}(\mathbb{N})$, we denote the inner product (henceforth the norm) on $\mathbf{H}'$ by adding an $a$-subscript.

\begin{proposition}\label{prop:cylindrical}
    Fix any $a=(a_{n})_{n\ge1}\in\ell^{2}(\mathbb{N})$, and any orthonormal basis $\{e_{n}\}_{n\ge1}$ of $L^{2}(D)$, define the Hilbert-Schmidt embedding $\iota$: $L^{2}(D)\mapsto \mathbf{H}':=\{f;\sum_{n\ge1}a_{n}^{2}\langle f,e_{n}\rangle^{2}<\infty\}$ by
    \begin{equation}\label{embedd}
    \iota e_{n}(x)=e_{n}(x),\quad\iota f(x)=\sum_{n=1}^{\infty}a_{n}\langle f(x),e_{n}(x)\rangle (a_{n}^{-1}e_{n}(x)),\quad\forall f\in L^{2}(D).
    \end{equation}
    The identities in \eqref{embedd} should be understood in the space $\mathbf{H}'$, in which the orthonormal basis is given by $\{a_{n}^{-1}e_{n}\}_{n\geq1}$. As a consequence, there exist an $\mathbf{H}'$-valued cylindrical $G$-Brownian motion $\mathbb{W}(t)$, which is given by the expansion
    \begin{equation}\label{expand}
        \mathbb{W}(t)=\sum_{n=1}^{\infty}\mathbb{W}_{n}(t)\iota e_{n}(x),
    \end{equation}
    where $\{\mathbb{W}_{n}(t)\}_{n\ge1}$ is a sequence of i.i.d. $G$-Brownian motion. For each $t\ge0$, the series in \eqref{expand} converges q.s. on $\mathbf{H}'$, and for any $h,k\in\mathbf{H}'$ with $\langle h,k\rangle_{a}\ge0$ and $0\le s\le t$, the $G$-Gaussian random variables $\langle\W(t),h\rangle_{a}$ and $\langle\W(s),k\rangle_{a}$ satisfy the covariance bound:
    \begin{equation*}
    \E(\langle\W(t),h\rangle_{a}\langle\W(s),k\rangle_{a})\le s\wedge t\langle k,\iota\iota^{*}h\rangle_{a}\bar{\sigma}^{2},\quad-\E(-\langle\W(t),h\rangle_{a}\langle\W(s),k\rangle_{a})\ge s\wedge t\langle k,\iota\iota^{*}h\rangle_{a}\underline{\sigma}^{2}.
    \end{equation*}
    In the above, $\iota^{*}:\mathbf{H}'\mapsto\mathbf{H}$ denotes the adjoint operator.
\end{proposition}
\begin{proof}
    We only need to check the definition \eqref{expand} by direct calculation
    $$
    \lim_{n,m\to\infty}\hat{\mathbb{E}}\big(\sum_{i=n}^{m}a_{i}^{2}\mathbb{W}_{i}(t)^{2}\big)\le \lim_{n,m\to\infty}t\bar{\sigma}^{2}\sum_{i=n}^{m}a_{i}^{2}\to0.
    $$
    Hence, for any fixed time $t\geq0$, $\sum_{i\ge1}a_{i}^{2}\mathbb{W}_{i}(t)^{2}$ is Cauchy in Banach space $\tilde{\mathcal{H}}^{1}$. We claim that there exists an $\mathbb{X}(t)\in\tilde{\mathcal{H}}^{1}$ as the $L^{1}$ limit of the series.
    According to Lemma~\ref{lem}, we can find a subsequence converging q.s. to $\mathbb{X}(t)$. Since the series has non-negative entries, $\mathbb{X}(t)$ is in fact the q.s. limit of $\sum_{i\ge1}a_{i}^{2}\mathbb{W}_{i}(t)^{2}$, and $\mathbb{X}(t)=\Vert\mathbb{W}(t)\Vert_{a}^{2}$. Finally, the $G$-Gaussianity of $\W(t)$ is verified by the moment method established in Theorem~\ref{dist} and Remark~\ref{moment-method}.
    \end{proof}

Since the spatial randomness of $\W(t)$ is the white noise, we develope another construction in the following result, which shows the coherence of these two random fields. 

\begin{proposition}\label{another}
Given a $G$-spatial white noise $\{\mathbb{W}(f(x));f(x)\in L^{2}(D)\}$ on $(\hat{\Omega},\hat{\mathcal{H}},\hat{\mathbb{E}})$, there exist a family of sublinear expectation spaces $(\hat{\Omega}_{t},\hat{\mathcal{H}}_{t},\hat{\mathbb{E}}_{t})_{t\ge0}$ such that for any fixed $t\ge0$, we have a G-white noise $\W(\sqrt{t}\;\cdot\;)$ on $(\hat{\Omega}_{t},\hat{\mathcal{H}}_{t},\hat{\mathbb{E}}_{t})$ with covariance functional: 
$$
G_{f_{1},...,f_{n}}(A)=\frac{1}{2}\displaystyle\sup_{\theta\in[t\underline{\sigma}^{2}, t\bar{\sigma}^{2}]}\tr(AB),\quad\forall A\in\mathbb{S}(n),\;B\in\Theta,\;f_{1},...,f_{n}\in L^{2}(D).
$$
In particular, $\Theta=\big(\int_{D} \theta f_{i}(x)f_{j}(x)dx\big)_{i,j=1}^{n}$.
Moreover, there exists a product space (to be defined in the proof) and a cylindrical G-Brownian motion $W(t)$ such that $\W(\sqrt{t}f(x))\overset{d}{=}\langle \W(t),f\rangle_{a}$.
\end{proposition}
 
\begin{proof}
To begin the construction, fix $t=1$ and find the smallest sub-vector lattice of $\hat{\mathcal{H}}$ spanned by the given white noise $\{\mathbb{W}(f(x));\forall f(x)\in 
 L^{2}(D)\}$ and denote it by $\hat{\h}_{1}$. Next we would like to perform reconstructions to the sublinear expectation spaces via a family of transformations $\tau_{t}:\:(\hat{\Omega},\hat{\mathcal{H}}_{1},\hat{\mathbb{E}})\mapsto(\hat{\Omega}_{t},\hat{\mathcal{H}}_{t},\hat{\mathbb{E}}_{t})$. For any $X_{1},...,X_{n}\in\hat{\mathcal{H}}_{1}$ and $\varphi\in C_{l,Lip}(\mathbb{R}^{n})$, define
\begin{equation*}
\hat{\Omega}_{t}=\hat{\Omega},\:\:\hat{\h}_{t}=\hat{\mathcal{H}}_{1},\:\:\E_{t}(\varphi(X_{1},...,X_{n}))=\hat{\mathbb{E}}(\sqrt{t}\varphi(X_{1},...,X_{n})).
\end{equation*}
Moreover, for different values of $t\ge0$ we require these sublinear expectation spaces to be mutually independent of each other. If we naively copy (just algebraically) the functions in the family $\W(\cdot)$ and paste them on the space $(\hat{\Omega}_{t},\hat{\mathcal{H}}_{t},\hat{\mathbb{E}}_{t})$ then its easy to see that the couple $(\W(\cdot),\E_{t})$ is identically distributed to $(\W(\sqrt{t}\;\cdot\;),\E)$. The final step is to appropriately glue $\{(\hat{\Omega}_{t},\hat{\mathcal{H}}_{t},\E_{t}),t\ge0\}$ together to obtain the distribution of a cylindrical $G$-Brownian motion. 

For this product space, we first let $\Omega=\otimes_{t\ge0}\hat{\Omega}_{t}$. The space of all sample paths $\h=Lip(\hat{\h})$ is defined very naturally by cylinder functions
$$
    Lip(\hat{\h})=\cup_{n=1}^{\infty}Lip(\hat{\h}_{n})
$$$$   
    =\bigcup_{n=1}^{\infty}\{\varphi(X_{1}\circ\pi_{t_{1}},...,X_{m}\circ\pi_{t_{m}});\forall\varphi\in C_{l,Lip}(\mathbb{R}^{m}), X_{i}\in\hat{\mathcal{H}}_{t_{i}}=\hat{\mathcal{H}}_{1}, 0\le t_{1}...\le t_{m}\le n,m\in\mathbb{N} \}
$$
where $\pi_{t}:\Omega\mapsto\hat{\Omega}_{t}$ is the coordinate projection. For the definition of sublinear expectation $\mathbb{E}$, extract any random variable $\varphi(X_{1}\circ\pi_{t_{1}},...,X_{m}\circ\pi_{t_{m}})$ and compute by independence
\begin{align}
\label{markov}
    \mathbb{E}(&\varphi(X_{1}\circ\pi_{t_{1}},...,X_{m}\circ\pi_{t_{m}}))\notag\\
    &=\mathbb{E}[\hat{\mathbb{E}}_{t_{m}-t_{m-1}}(\varphi(x_{1},...,x_{m-1},X))_{x_{1}=X_{1}\circ\pi_{t_{1}},...,x_{m-1}=X_{m-1}\circ\pi_{t_{m-1}}}]\notag\\
    &=\E_{t_{1}}\circ\E_{t_{2}-t_{1}}\circ\cdot\cdot\cdot\circ\E_{t_{m}-t_{m-1}}(\varphi(X_{1}\circ\pi_{t_{1}},...,X_{m}\circ\pi_{t_{m}})).\tag{Markov}
\end{align}
We claim that the random field $W(t):=\{X\circ\pi_{t};X\in\W(\cdot),t\ge0\}$ is the desired cylindrical $G$-Brownian motion. Indeed, one easily checks that for each fixed $t\ge0$, it is a $G$-spatial white noise with the correct covariance functional, and the orthonormal expansion \eqref{expand} is obtained by applying the Hilbert-Schmidt embedding $\iota$. Last, \eqref{markov} guarantees that for any $f\in L^{2}(D)$, the $1d$ process $\langle W(t),f\rangle_{a}$ is a $G$-Brownian motion.
\end{proof}

Consider a spacetime random noise $f(t,x,\omega)$ with $t\ge0$, $x\in D$ and $\omega\in\hat{\Omega}$, the spatial and temporal part of $f$ is understood to be a random generalized function in the sense that, for any deterministic function $g(t,x)\in L^{2}([0,\infty)\times D)$, we have $\langle f,g\rangle\in \tilde{\h}^{2}$. If for any fixed $t\in[0,T]$, the field $f(t,\cdot,\omega)\subset Lip(\hat{\h}_{t})$, and satisfies $(\hat{\mathbb{E}} \int_{0}^{t}\Vert f(s,x,\omega)\Vert^{2}_{L^{2}}ds)^{1/2}<\infty$, we say $f(t,x,\omega)$ is a predictable $\h^{2}([0,T];L^{2}(D))$ process. For such processes, we now define the infinite dimensional stochastic integration against the cylindrical $G$-Brownian motion $\W(t)$.
\begin{proposition}
    Let $f(t,x,\omega)\in \h^{2}([0,T];L^{2}(D))$, we define the stochastic integral of $f$ with respect to the cylindrical $G$-Brownian motion \eqref{expand} in the following sense:
    $$
    \int_{0}^{t}f\cdot\mathbb{W}(ds):=\int_{0}^{t}\langle f,\mathbb{W}(ds)\rangle=\sum_{i=1}^{\infty}\int_{0}^{t}\langle f(s,x),\iota^{*}\iota e_{i}(x)\rangle\mathbb{W}_{i}(ds),\quad t\in[0,T].
    $$
    In fact, the integration is a bounded linear map from $\h^{2}([0,T];L^{2}(D))$ to $\tilde{\h}^{2}$, which is also strictly bounded from below:
    \begin{equation}\label{up}
        \hat{\mathbb{E}}\Big\vert\int_{0}^{T}f\cdot\mathbb{W}(ds)\Big\vert^{2}\lesssim\bar{\sigma}^{2}\hat{\mathbb{E}} \int_{0}^{T}\Vert f(t,x,\omega)\Vert^{2}_{L^{2}}dt,
    \end{equation}
    \begin{equation}\label{down}
        \hat{\mathbb{E}}\Big\vert\int_{0}^{T}f\cdot\mathbb{W}(ds)\Big\vert^{2}\gtrsim\underline{\sigma}^{2}\hat{\mathbb{E}} \int_{0}^{T}\Vert f(t,x,\omega)\Vert^{2}_{L^{2}}dt.
    \end{equation}
\end{proposition}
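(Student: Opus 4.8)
The plan is to reduce everything to the scalar stochastic integral of Proposition 3 by recognizing the partial sums of the defining series as spacetime integrals of the partial Fourier sums of $f$, and then to transport the two isometry-type estimates (6) and (7) across the limit using Parseval's identity together with the convergence theorems for the sublinear expectation $\hat{\mathbb{E}}$.

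First I would fix $t$ and set $f_{N}(s,x,\omega)=\sum_{i=1}^{N}\langle f(s,\cdot),e_{i}\rangle e_{i}(x)$, the $N$-th partial Fourier sum of $f(s,\cdot)$ in $L^{2}(M)$. Since the scalar integral of Proposition 3 is linear in its integrand, and the parameter map $h\mapsto\mathbb{W}(\cdot,h)$ is linear and isometric in distribution by Theorem 1, each single term can be identified as $\int_{0}^{t}\langle f(s,\cdot),e_{i}\rangle\mathbb{W}(ds,e_{i})=\int_{0}^{t}\int_{M}\langle f(s,\cdot),e_{i}\rangle e_{i}(x)\mathbb{W}(ds,dx)$, so that the $N$-th partial sum is exactly $S_{N}:=\int_{0}^{t}\int_{M}f_{N}(s,x,\omega)\mathbb{W}(ds,dx)$. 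Applying the upper estimate (6) to $f_{M}-f_{N}$ and using orthonormality of $\{e_{i}\}$ (Parseval) to compute $\Vert f_{M}(s,\cdot)-f_{N}(s,\cdot)\Vert_{L^{2}}^{2}=\sum_{i=N+1}^{M}\langle f(s,\cdot),e_{i}\rangle^{2}$, I obtain
\[
\hat{\mathbb{E}}|S_{M}-S_{N}|^{2}\le\bar{\sigma}^{2}\,\hat{\mathbb{E}}\int_{0}^{t}\sum_{i=N+1}^{M}\langle f(s,\cdot),e_{i}\rangle^{2}\,ds\le\bar{\sigma}^{2}\,\hat{\mathbb{E}}\int_{0}^{t}\sum_{i>N}\langle f(s,\cdot),e_{i}\rangle^{2}\,ds.
\]
The tail integrand decreases to $0$ pointwise and is dominated by $\Vert f(s,\cdot)\Vert_{L^{2}}^{2}\in L_{G}^{1}$, so the right side tends to $0$ as $N\to\infty$; hence $\{S_{N}\}$ is Cauchy in $L_{G}^{2}(\Omega_{t})$ and the series converges to a limit which I take as the definition of $\int_{0}^{t}f\cdot\mathbb{W}(ds)$.

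For the two inequalities, I would apply (6) and (7) to each $f_{N}$, giving
\[
\underline{\sigma}^{2}\,\hat{\mathbb{E}}\int_{0}^{T}\Vert f_{N}(s,\cdot)\Vert_{L^{2}}^{2}\,ds\le\hat{\mathbb{E}}|S_{N}|^{2}\le\bar{\sigma}^{2}\,\hat{\mathbb{E}}\int_{0}^{T}\Vert f_{N}(s,\cdot)\Vert_{L^{2}}^{2}\,ds,
\]
and then pass to the limit $N\to\infty$. On the outer two members $\Vert f_{N}(s,\cdot)\Vert_{L^{2}}^{2}\uparrow\Vert f(s,\cdot)\Vert_{L^{2}}^{2}$, so upward monotone convergence for $\hat{\mathbb{E}}$ (which holds since $\hat{\mathbb{E}}$ is a supremum of linear expectations) recovers the $L^{2}$-norms of $f$ appearing in (11) and (12); in the middle, $S_{N}\to\int_{0}^{T}f\cdot\mathbb{W}(ds)$ in $L_{G}^{2}$ forces $\hat{\mathbb{E}}|S_{N}|^{2}\to\hat{\mathbb{E}}|\int_{0}^{T}f\cdot\mathbb{W}(ds)|^{2}$ by continuity of the norm. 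This delivers both estimates, and linearity of the operator is immediate from the linearity of the defining series.

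I expect the main obstacle to be the convergence step under the sublinear expectation: unlike the classical case, $\hat{\mathbb{E}}$ is only subadditive, so the downward passage $\hat{\mathbb{E}}\int_{0}^{t}\sum_{i>N}\langle f,e_{i}\rangle^{2}\,ds\to 0$ cannot be extracted from additivity and genuinely requires a dominated convergence theorem for $\hat{\mathbb{E}}$ on $L_{G}^{1}$ (continuity from above, valid because the representing family of measures is weakly compact). A secondary technical point worth checking carefully is the term-by-term identification $\int_{0}^{t}\langle f(s,\cdot),e_{i}\rangle\mathbb{W}(ds,e_{i})=\int_{0}^{t}\int_{M}\langle f(s,\cdot),e_{i}\rangle e_{i}(x)\mathbb{W}(ds,dx)$, which I would justify by approximating $e_{i}$ in $L^{2}(M)$ by simple functions and invoking the isometry of Proposition 3 together with the distributional linearity of Theorem 1.
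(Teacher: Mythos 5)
Your argument is correct and arrives at both estimates, but it takes a genuinely different route from the paper. You reduce everything to Proposition 3 by identifying the $N$-th partial sum of the defining series with the spacetime white-noise integral of the partial Fourier sum $f_{N}$, and then let Parseval plus the already-proved bounds (6)--(7) do all the work; the paper instead manipulates the series directly, invoking Lemma 35 of \cite{ref1} to turn $\hat{\mathbb{E}}\bigl[(\sum_{i}\cdots)^{2}\bigr]$ into $\hat{\mathbb{E}}\bigl[\sum_{i}(\cdots)^{2}\bigr]$ and then re-deriving the two isometry bounds from scratch via the auxiliary variables $\xi$ and $\xi'$ together with subadditivity of $\hat{\mathbb{E}}$. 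Your reduction is cleaner in that it reuses Proposition 3 rather than repeating its proof, but it leans entirely on the term-by-term identification $\int_{0}^{t}\langle f(s,\cdot),e_{i}\rangle\,\mathbb{W}(ds,e_{i})=\int_{0}^{t}\int_{M}\langle f(s,\cdot),e_{i}\rangle e_{i}(x)\,\mathbb{W}(ds,dx)$, which you rightly flag: Remark 1 warns that $h\mapsto\mathbb{W}_{h}$ is not pointwise linear, so this step needs the quasi-sure ($L^{2}_{G}$) linearity, i.e.\ $\hat{\mathbb{E}}\bigl[(\mathbb{W}_{h+k}-\mathbb{W}_{h}-\mathbb{W}_{k})^{2}\bigr]=\sup_{\theta}\theta\Vert h+k-h-k\Vert^{2}=0$, combined with the continuity $\hat{\mathbb{E}}[(\mathbb{W}_{h}-\mathbb{W}_{k})^{2}]=\bar{\sigma}^{2}\Vert h-k\Vert^{2}$ to pass from simple spatial functions to $e_{i}$; this deserves to be stated as a lemma rather than a parenthetical. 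Finally, the downward convergence $\hat{\mathbb{E}}\int_{0}^{t}\sum_{i>N}\langle f,e_{i}\rangle^{2}ds\to 0$ that you single out as the main obstacle is equally present (and equally unproved) in the paper's own proof, both in its Cauchy step and in its final exchange of limit and expectation, so naming dominated/monotone convergence for $\hat{\mathbb{E}}$ on $L^{1}_{G}$ as the required tool is a point in your favour rather than a gap relative to the paper.
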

\begin{proof}
    Since $\mathbb{W}(t)$ is white in space, we proceed to calculate
    \begin{align*}
    &\hat{\mathbb{E}}\Big[\sum_{i=n}^{m}\int_{0}^{t}\langle f(s,x),\iota^{*}\iota e_{i}(x)\rangle\mathbb{W}_{i}(ds)\Big]^{2}
    =\hat{\mathbb{E}}\sum_{i=n}^{m}\Big[\int_{0}^{t}\langle f(s,x),\iota^{*}\iota e_{i}(x)\rangle\mathbb{W}_{i}(ds)\Big]^{2}\\
    &\lesssim\hat{\mathbb{E}}\Big(\xi+\bar{\sigma}^{2}\sum_{i=n}^{m}\int_{0}^{t}\vert\langle f(s,x),e_{i}(x)\rangle\vert^{2}ds\Big)
    \le\bar{\sigma}^{2}\hat{\mathbb{E}}\sum_{i=n}^{m}\Big(\int_{0}^{t}\vert\langle f(s,x),e_{i}(x)\rangle\vert^{2}ds\Big).
    \end{align*}
    where the positive constant in the `$\lesssim$' notion is arbitrary and only depends on the choice of the embedding $\iota$. In the above calculation, we have introduced an error random variable $\xi$ and according to the quadratic variation of $G$-Brownian motion (see \cite[Chapter~3]{Peng19}), it satisfies the following estimate
    \begin{equation*}
    \xi=\sum_{i=n}^{m}\Big[\big(\int_{0}^{t}\langle f(s,x),e_{i}(x)\rangle\mathbb{W}_{i}(ds)\big)^{2}-\bar{\sigma}^{2}\int_{0}^{t}\vert\langle f(s,x),e_{i}(x)\rangle\vert^{2}ds\Big],
    \end{equation*}
    \begin{equation*}
    \hat{\mathbb{E}}\xi\le\sum_{i=n}^{m}\hat{\mathbb{E}}\Big[\big(\int_{0}^{t}\langle f(s,x),e_{i}(x)\rangle\mathbb{W}_{i}(ds)\big)^{2}-\bar{\sigma}^{2}\int_{0}^{t}\vert\langle f(s,x),e_{i}(x)\rangle\vert^{2}ds\Big]\le0.
    \end{equation*}
    This shows the integral $\int_{0}^{t}f\cdot\mathbb{W}(dt)$ is well-defined in the $L^{2}$ sense, meanwhile the continuity and boundedness of the integral operator is established. For stochastic calculus with uncertainty, the Itô isometry also has a lower bound \eqref{down} (also recall \eqref{covariance-bd}), and for the proof, we construct another handy random variable $\xi'$ in the following sense:
    \begin{equation*}
    \xi'=\sum_{i=n}^{m}\Big[\underline{\sigma}^{2}\int_{0}^{t}\vert\langle f(s,x),e_{i}(x)\rangle\vert^{2}ds-\big(\int_{0}^{t}\langle f(s,x),e_{i}(x)\rangle\mathbb{W}_{i}(ds)\big)^{2}\Big],
    \end{equation*}
    \begin{equation*}
    \hat{\mathbb{E}}\xi'\le\sum_{i=n}^{m}\hat{\mathbb{E}}\Big[\underline{\sigma}^{2}\int_{0}^{t}\vert\langle f(s,x),e_{i}(x)\rangle\vert^{2}ds-\hat{\mathbb{E}}\big(\int_{0}^{t}\langle f(s,x),e_{i}(x)\rangle\mathbb{W}_{i}(ds)\big)^{2}\Big]\le0.
    \end{equation*}
    These yields
    \begin{align}
        \hat{\mathbb{E}}\Big\vert\int_{0}^{T}f\cdot\mathbb{W}(ds)\Big\vert^{2}
        \lesssim\lim_{n\to\infty}\hat{\mathbb{E}}\Big(\sum_{i=1}^{n}\bar{\sigma}^{2}\int_{0}^{T}\vert\langle f(s,x),e_{i}(x)\rangle\vert^{2}ds\Big),\label{551}\\
        \hat{\mathbb{E}}\Big\vert\int_{0}^{T}f\cdot\mathbb{W}(ds)\Big\vert^{2}
        \gtrsim\lim_{n\to\infty}\hat{\mathbb{E}}\Big(\sum_{i=1}^{n}\underline{\sigma}^{2}\int_{0}^{T}\vert\langle f(s,x),e_{i}(x)\rangle\vert^{2}ds\Big).\label{552}
    \end{align}
    In addition, we have
    $$
    0\le\hat{\mathbb{E}}\int_{0}^{T}\Vert f(t,x,\omega)\Vert^{2}_{L^{2}}dt-\hat{\mathbb{E}}\Big(\sum_{i=1}^{n}\int_{0}^{T}\vert\langle f(s,x),e_{i}(x)\rangle\vert^{2}ds\Big)
    $$
    $$
    \le\hat{\mathbb{E}}\Big(\int_{0}^{T}\Vert f(t,x,\omega)\Vert^{2}_{L^{2}}dt-\sum_{i=1}^{n}\int_{0}^{T}\vert\langle f(s,x),e_{i}(x)\rangle\vert^{2}ds\Big)\to0,\quad n\to\infty.
    $$
Thus the limit and the sublinear expectation in \eqref{551} and \eqref{552} are exchangeable and the proof is therefore complete. 
\end{proof}

\section{Stochastic quantization and massive $G$-Gaussian free field}
\label{sq}

Consider the path integral quantization of a bosonic field $\phi$ in 4-dimensional spacetime, which is based on a measure informally given by $\exp[iS(\phi)]\mathcal{D}\phi$, where $S(\cdot)$ is the action functional and $\mathcal{D}(\phi)$ is the uniform distribution on the set of all classical fields $\phi\in\mathcal{S}'(\mathbb{R}^{4})$. The rigorous construction of this measure faces a major difficulty, that is, the Lorentzian nature of the spacetime metric leads to an oscillating kinetic term $\exp[iS(\phi)]$. One possible solution to this problem is the Wick rotation. Namely, we rotate the temporal axis counterclockwise by $\pi/2$, and the result is an infinite volume Gibbs measure:
\begin{equation}\label{EQFT}
    \mu(d\phi)\propto e^{-S_{E}(\phi)}\mathcal{D}(\phi).
\end{equation}
The positive definiteness of the Riemannian metric and the $E$-subscript suggested that this model is an Euclidean quantum field theory (EQFT). Canonical constructions of measures of the type \eqref{EQFT} is required to satisfy the Osterwalder-Schrader axioms (see \cite{GJ}), and luckily, much of the scalar field theory can be resolved. For example, see \cite{salmhofer} for a detailed discussion of the renoramlization group (RG) approach to the $\phi^{4}$ theory.

For the Euclidean bosonic field on $D\subset\mathbb{R}^{d}$, the measure is expressed by
\begin{equation}\label{bos}
    \mu(d\phi)=\frac{1}{Z}e^{-\frac12\langle\phi,(-\Delta+m^{2})\phi\rangle-\lambda\int_{D}V(\phi(x))dx}\mathcal{D}(\phi).
\end{equation}
Here $Z$ is the partition function, $\langle\cdot,\cdot\rangle$ denotes the usual $L^{2}$ inner product, $m$ denotes the mass of the field quanta, and $V$ is the interacting potential. In 2-dimensions, \eqref{bos} is called the $\phi^{4}_{2}$ model and the Sine-Gordon model if we take $V(\phi)=\phi^{4}$ and $V(\phi)=\cos(\beta\phi)$, $\beta^{2}<8\pi$ respectively. To simplify the argument, we only consider the free field case, i.e. $V=0$, and the model is nothing but a infinite dimensional Gaussian measure with covariance operator $-\Delta+m^{2}$. 

Now we briefly explain Parisi-Wu's dynamical approach for the construction of \eqref{EQFT}. In addition to the spacetime coordinates $x\in D$, we construct an additional fictitious time $t\ge0$ and couple it to the field: $\phi(x)\mapsto\phi(x,t)$. The physical interpretation of this trick is that the quantum system is under an imaginary coupling with a heat reservoir at a fixed large temperature, and $t$ measures the time to set up thermodynamical equilibrium.
Similar to a classical particle immersed in fluid, this fictitious evolution $\phi(x,t)$ is described by a Langevin dynamics, which is an SPDE of the following type:
\begin{equation}\label{1111}
\partial_{t}\phi(x,t)=-\frac{\delta S_{E}}{\delta\phi(x,t)}+W (x,t).
\end{equation}
The notion $W(x,t)$ refers to the spacetime Gaussian white noise on $[0,\infty)\times D$, and in linear expectation theory, it is a centered Gaussian random field with covariance $\mathbb{E}[W(x,t)W(y,s)]=\delta(x-y)\delta(t-s)$. For the reason of this choice, one can think of the spatially discretized models, e.g., a lattice spin system on the rescaled space $\varepsilon\mathbb{Z}^{d}\cap D$. By graphical construction, the dynamics on the vertices is the random shift of the particle numbers driven by a family of spatially independent Poisson clocks, which approximates white noise in the scaling limit $\varepsilon\to0$. On the RHS of \eqref{1111}, we have a variational derivative of the Euclidean action $S_{E}=\int dx\mathcal{L}(\phi(x),\nabla\phi(x))$ with Lagrangian density $\mathcal{L}$. In the case of Gaussian free field (GFF), the Langevin dynamics on $D$ is the stochastic reaction-diffusion equation:
\begin{equation}\label{lan}
    \partial_{t}\phi(x,t)=(\Delta-m^{2})\phi(x,t)+W(x,t).
\end{equation}
It is well-known in stochastic analysis that the equilibrium measure $\mu$ of \eqref{lan} is the rigorous setting of massive Gaussian free field. The solution to the SPDE is the infinite dimensional Ornstein-Uhlenbeck process with Markov semigroup $P_{t}$. Starting from any initial distribution $\nu$, we have an exponentially fast convergence of measure $\Vert P_{t}^{*}\nu-\mu\Vert_{TV}$ in the total variation distance.

In the context of sublinear expectation, one expects a similar type of construction of the $G$-Gaussian free field ($G$GFF). However, the convergence to equilibrium is only known in a much weaker sense. Since the solution is a continuous process such that for each fixed time $t>0$, the distribution is zero mean $G$-Gaussian, we follow the moment method and expect the limit random field to be also $G$-Gaussian distributed. In particular, we want to specified the family of equilibrium covariance functionals. Following \cite{LS}, we will show that for any suitable test function $f$ and $g$, the large time limit obeys
\begin{equation}\label{ub}
    \lim_{t\to\infty}\E\big(\langle \phi(x,t),f(x)\rangle\langle \phi(y,t),g(y)\rangle\big)\lesssim\bar{\sigma}^{2}\iint_{D^{2}}G_{m}(x,y)f(x)g(y)dxdy,
\end{equation}
\begin{equation}\label{lb}
    \lim_{t\to\infty}-\E\big(-\langle \phi(x,t),f(x)\rangle\langle \phi(y,t),g(y)\rangle\big)\gtrsim\underline{\sigma}^{2}\iint_{D^{2}}G_{m}(x,y)f(x)g(y)dxdy.
\end{equation}
In the above, $G_{m}(x,y)$ denotes the massive Green's function of the operator $(2\pi)^{-1}(-\Delta+m^{2})$ in domain $D$. Due to the spectral property of the Laplacian on $D$, the bounded case and unbounded case are treated differently. Following \cite{sheffield}, we also expect a conformal symmetry of the geometry of the $2$-dimensional $G$GFF. In the following explanations, we assume a priori that $D$ is bounded.

\subsection{Dynamical $G$-Gaussian free field}

In this subsection, we study the solution to the stochastic PDE on bounded domain $D\subset\mathbb{R}^{d}$, $d\ge2$ with zero boundary condition:
\begin{equation}\label{spde}
    d\phi(x,t)=(\Delta-m^{2})\phi(x,t)dt+d\W(t),\quad \phi(x,0)=\psi(x)\in C_{0}^{\infty}(D),\; \phi(x,t)\vert_{x\in\partial D}=0,
\end{equation}
where $\W(t)$ is defined by \eqref{expand}. A predictable square-integrable process $\phi(x,t)\in\h^{2}([0,T];L^{2}(D))$ is called a mild solution to \eqref{spde} if it satisfies the stochastic Duhamel's principle
\begin{equation}\label{duhamel}
    \phi(x,t)=\int_{D}P_{t}(x,y)\psi(y)dy+\int_{0}^{t} P_{t-s}(x,y)\cdot \W(ds,y).
\end{equation}
For the analytic semigroup $P_{t}$ generated by $\mathcal{L}=\Delta-m^{2}$, we refer to the integral kernel
\begin{equation}\label{heat}
    P_{t}(x,y)=\frac{1}{(4\pi t)^{d/2}}\;e^{-\frac{\vert x-y\vert^{2}}{4t}-m^{2}t}.
\end{equation}
Recall Propostition~\ref{another}, the dot notation on the RHS of \eqref{duhamel} is a shorthand for the spatial stochastic integration with respect to the fixed-time $G$-white noise $\W(\sqrt{s}\;\cdot\;)$. We should mention that, the notion of weak solution discussed in \cite{walsh} actually coincide with mild solutions in most cases, the proof of this equivalence is similar to the linear expectation case \cite{hairer}.

The main goal of this article is to show that the mild solution \eqref{duhamel} converges in law to the massive $G$GFF under $t\to\infty$. We now give a definition of the massive $G$GFF on $D$ with Dirichlet boundary condition, which is motivated by the linear expectation case discussed in \cite{BP}.

\begin{definition}\label{GGFF}
    Consider a $G$-Gaussian random field $\Psi$ parametrized by the family $\mathcal{M}$ of compactly supported finite signed measures on $D$. Then $\Psi$ is called a Dirichlet $G$-Gaussian free field with mass $m>0$ and variance regime $0<\underline{\sigma}^{2}\leq\bar{\sigma}^{2}$ if for any $\mu,\nu\in\mathcal{M}$, we have
    \begin{equation}\label{cor-GGFF}
        \E(\Psi_{\mu}\Psi_{\nu})=\bar{\sigma}^{2}\int_{D^{2}}G_{m}(x,y)\mu(dx)\nu(dy),\quad-\E(-\Psi_{\mu}\Psi_{\nu})=\underline{\sigma}^{2}\int_{D^{2}}G_{m}(x,y)\mu(dx)\nu(dy).
    \end{equation}
    The function $G_{m}(x,y)$ in \eqref{cor-GGFF} refers to the Green's function for $(2\pi)^{-1}(\Delta-m^{2})$.
\end{definition}

\begin{remark}\label{rem:GGFF}
    The above definition does not yield any uniqueness of such massive $G$GFF. By taking the large time limit for \eqref{duhamel}, we show that the limit distribution satisfies Definition~\ref{GGFF}, which only guarantees the existence of the $G$GFF. In our setting, we restrict our argument to the family of measures: $\{\mu\in\mathcal{M};\mu(dx)=\rho(x)dx,\;\rho\in H^{-1}_{0}(D)\}$.
\end{remark}

Before we could specify the state space of the mild solution of \eqref{spde}, we need some preliminaries on the spectrum of finite volume Laplacian. Observe that the $L^{2}$ space of closed interval $[0,2\pi]$ admits an orthonormal basis:
$$
\big\{\frac{1}{\sqrt{2\pi}},\frac{\sin{kx}}{\sqrt{\pi}},\frac{\cos{kx}}{\sqrt{\pi}},k\in\mathbb{N}\big\}.
$$
One can check that each one of these trigonometric function is an eigenfunction of the 1-dimensional operator $\partial_{x}^{2}$. In fact, one can extend this to any bounded curved spacetime. For example, \cite{berard} studied the the following Dirichlet boundary value problem (Problem D) for the Laplace-Beltrami operator $-\Delta f=-\operatorname{div}(\operatorname{grad}f)=-\nabla^{i}\nabla_{i}f$ defined on a compact Riemannian manifold $(M,g)$:
\begin{equation}\label{D}\tag{D}
    -\Delta f=\lambda f,\quad f\in C^{\infty}(M),\;f\vert_{\partial M}=0.
\end{equation}
\begin{lemma}\label{lem:lap}
    Problem~\eqref{D} admits the following solution:
    \begin{enumerate}[(i)]
    \item The set of eigenvalues consists of an infinite sequence $\mathrm{0<\lambda_{1}\le\lambda_{2}\le\cdot\cdot\cdot}$ such that $\lim_{n\to\infty}\lambda_{n}=\infty$.

    \item Each eigenvalue has finite multiplicity and the eigenspaces for distinct eigenvalues are orthogonal in $\mathbf{H}=L^{2}(M)$.

    \item Each eigenfunction is smooth, analytic and all eigenfunctions span a dense linear subspace of $\mathbf{H}=L^{2}(M)$.
    \end{enumerate}
\end{lemma}

For simplicity, we relabel the set of eigenvalues by increasing order $\{\lambda_{n}, n\in\mathbb{N}\}$, and denote the set of orthonormal eigenfunctions by $\{e_{n}(x)\in C^{\infty}(D), n\in\mathbb{N}\}$. By Weyl's law, the asymptotic distribution is roughly given by $\lambda_{n}\sim n^{2/d}$. In order to expand the cylindrical $G$-Brownian motion $\W(t)$, henceforth the mild solution $\phi(x,t)$ by eigenfunctions, we choose the canonical embedding to be $\iota:L^{2}(D)\hookrightarrow H^{s}_{0}(D)$ with $s=-d/2-\varepsilon$ and $\varepsilon>0$ arbitrarily small. The fact that $\iota$ is Hilbert-Schmidt follows from $\{\lambda_{n}^{-s/2}e_{n}\}$ being the orthonormal basis of $H^{s}_{0}(D)$. As a consequence, for any fixed $t\ge0$, the mild solution also has spatial regularity: $\phi(x,t)\in H^{s}_{0}(D)$, i.e., we have $\sum_{n\ge1}n^{2s/d}\langle\phi(t),e_{n}\rangle^{2}<\infty$, q.s..
Heuristically speaking, we imagine that $\phi(x,t)=\sum_{n\geq1}\phi_{n}(t)e_{n}(x)$ converges in $H^{s}_{0}(D)$, and taking the $L^{2}(D)$ inner product will give:
\begin{equation*}
\langle(\Delta-m^{2})\phi(x,t),e_{n}(x)\rangle=-(\lambda_{n}+m^{2})\phi_{n}(t).
\end{equation*}
Notice that $\phi_{n}(t)$ is a sequence of predictable diffusion processes satisfying SDEs driven by i.i.d. $1$-dimensional $G$-Brownian Motions:
\begin{equation}\label{ou}
    d\phi_{n}(t,\omega)=-(\lambda_{n}+m^{2})\phi_{n}(t,\omega)dt+\mathbb{W}_{n}(dt),\;n\in\mathbb{N}
\end{equation}
with $\mathbb{W}_{n}(1)\backsim\mathcal{N}(0, [\underline{\sigma}^{2},\bar{\sigma}^{2}])$. Calculating the solution to these equations implies that the Fourier coefficients $\phi_{n}$ are indeed the $G$-Ornstein Uhlenbeck processes. Summing up $n\geq1$ in the Sobolev space $H^{s}_{0}(D)$ will produce the full solution to \eqref{spde}. In the following, we begin the proof of our heuristics with the property of the $G$-Ornstein Uhlenbeck process.

\begin{lemma}\label{ouprocess}
    Without loss of generality, let $\phi(t)$ denote a $G$-Ornstein Uhlenbeck process expanded by the Itô's integral with respect to the $G$-Brownian motion $\W(dt)$:
    $$
    \phi(t)=ce^{-at}+\int_{0}^{t}e^{-a(t-s)}\mathbb{W}(ds),\quad t\in[0,T],\;a>0.
    $$
    Then the covariance functions $\Gamma(s,t),\forall s,t\in[0,T]$ obeys the following estimation:
    \begin{align}
\Gamma(s,t)=\mathrm{cov}(\phi(t),\phi(s))\le\frac{\bar{\sigma}^{2}}{2a}(e^{-a\vert t-s\vert}-e^{-a(s+t)}).
    \end{align}
\end{lemma}
\begin{proof}
    By calculation we have 
    \begin{align*}
        \Gamma_{n}(s,t)
        &=e^{-at}e^{-as}\hat{\mathbb{E}}\int_{0}^{t}e^{au}\mathbb{W}(du)\int_{0}^{s}e^{ar}\mathbb{W}(dr)\\
        &=e^{-at}e^{-as}\hat{\mathbb{E}}\int_{0}^{T}e^{2ar}1_{0\le r\le t}1_{0\le r\le s}\langle\mathbb{W}\rangle(dr)\\
        &\le\bar{\sigma}^{2}e^{-at}e^{-as}\hat{\mathbb{E}}\int_{0}^{s\wedge t}e^{2ar}dr\\
        &=\frac{\bar{\sigma}^{2}}{2a}(e^{-a\vert t-s\vert}-e^{-a(s+t)}).
    \end{align*}
The second equality follows from Itô's isometry and the third inequality follows from the property of integration against quadratic variation process (see Lemma 3.4.3 in \cite{Peng19}).
\end{proof}

\begin{theorem}
For any $T>0$, the stochastic reaction-diffusion equation \eqref{spde} admits a mild solution $\phi\in\h^{2}([0,T];H_{0}^{s}(D))$. More precisely, for any initial distribution $\psi(x)\in C_{0}^{\infty}(D)$, we have the decomposition:
\begin{equation}\label{mildd}
\phi(x,t)=(P_{t}\psi)(x)+\sum_{n=1}^{\infty}\phi_{n}(t)e_{n}(x),\quad q.s.\;\mathrm{in\;} H_{0}^{s}(D).
\end{equation}
In particular, $s<-d/2$, the semigroup $P_{t}$ is given by \eqref{heat}, and for each $n\ge1$, the $G$-Ornstein Uhlenbeck process $\phi_{n}(t)$ is the solution to \eqref{ou} with $a=\lambda_{n}+m^{2}$.
    
\end{theorem}

\begin{proof}
    We now complete the details of the proof sketched in the previous contexts. To calculate the mild solution \eqref{duhamel}, for each $n\ge1$, we take the $H^{s}_{0}(D)$ inner product $\lambda_{n}^{s}\langle\;\cdot\;,\lambda_{n}^{-s/2}e_{n}(x)\rangle$ on both sides and then eliminate the extra $\lambda_{n}^{s/2}$. Since the orthonormal expansion and the summation over $n$ does not affect the deterministic part of the dynamics $P_{t}\psi$, the mild solution equals to \eqref{mildd} in $\h^{2}([0,T];H_{0}^{s}(D))$ if and only if one has existence and uniqueness for the solution of the SDEs \eqref{ou} in the space $\h^{2}([0,T];\mathbb{R})$. The convergence of the series $\sum_{n\geq1}\phi_{n}(t)e_{n}(x)$ in $H^{s}_{0}(D)$ is ensured by Lemma~\ref{ouprocess} and the fact that $\sum_{n\geq1}\lambda_{n}^{s}\bar{\sigma}^{2}(\lambda_{n}+m^{2})^{-1}<\infty$.
    
    The topology of the space of square integrable predictable processes $\eta\in\h^{2}([0,T];\mathbb{R})$ is generated by the norm $[\int_{0}^{2\pi}\hat{\mathbb{E}}(\eta_{t}^{2})dt]^{2}$. We define a family of mapping $\Lambda^{n}:\h^{2}([0,T];\mathbb{R})\mapsto\h^{2}([0,T];\mathbb{R})$ by 
    \begin{equation}
       \Lambda^{n}_{t}(X)=\psi_{n}-\int_{0}^{t}(\lambda_{n}+m^{2})Xds+\int_{0}^{t}\mathbb{W}_{n}(ds),\quad \psi_{n}=\lambda_{n}^{-s/2}\langle\psi,e_{n}\rangle,\;\forall X\in\h^{2}([0,T];\mathbb{R}).
    \end{equation}
    For any $X_{1}$, $X_{2}\in\h^{2}([0,T];\mathbb{R})$, we compute
    \begin{align*}
        \hat{\mathbb{E}}\vert&\Lambda^{n}_{t}(X_{1})-\Lambda^{n}_{t}(X_{2})\vert^{2}
        =\hat{\mathbb{E}}\Big\vert\int_{0}^{t}(\lambda_{n}+m^{2})(X_{1}-X_{2})ds\Big\vert^{2}\\
        &\le(\lambda_{n}+m^{2})^{2}\hat{\mathbb{E}}\int_{0}^{t}(X_{1}-X_{2})^{2}ds\le(\lambda_{n}+m^{2})^{2}\int_{0}^{t}\hat{\mathbb{E}}(X_{1}-X_{2})^{2}ds.
    \end{align*}
    Multiplying both sides by $e^{-2(\lambda_{n}+m^{2})^{2}t}$ and integrate over $t$, we get
   \begin{align*}
   \int_{0}^{2\pi}e^{-2(\lambda_{n}+m^{2})^{2}t}&\hat{\mathbb{E}}\vert\Lambda^{n}_{t}(X_{1})-\Lambda^{n}_{t}(X_{2})\vert^{2}dt\\
   &\le(\lambda_{n}+m^{2})^{2}\int_{0}^{2\pi}\int_{0}^{t}e^{-2(\lambda_{n}+m^{2})^{2}t}\hat{\mathbb{E}}(X_{1}-X_{2})^{2}dsdt\\
       & =(\lambda_{n}+m^{2})^{2}\int_{0}^{2\pi}\int_{s}^{2\pi}e^{-2(\lambda_{n}+m^{2})^{2}t}\hat{\mathbb{E}}(X_{1}-X_{2})^{2}dtds\\
     &=\frac{1}{2}\int_{0}^{2\pi}\big[e^{-2(\lambda_{n}+m^{2})^{2}s}-e^{-4(\lambda_{n}+m^{2})^{2}\pi}\big]\hat{\mathbb{E}}(X_{1}-X_{2})^{2}ds\\
        &\le\frac{1}{2}\int_{0}^{2\pi}e^{-2(\lambda_{n}+m^{2})^{2}s}\hat{\mathbb{E}}(X_{1}-X_{2})^{2}ds.
\end{align*}
 Notice that $[\int_{0}^{2\pi}\hat{\mathbb{E}}(\eta_{t}^{2})dt]^{2}$ and $[\int_{0}^{2\pi}e^{-2(\lambda_{n}+m^{2})^{2}t}\hat{\mathbb{E}}(\eta_{t}^{2})dt]^{2}$ are two equivalent norms, $\Lambda^{n}$ is a contraction on Banach space $\h^{2}([0,T];\mathbb{R})$, this concludes the proof.
\end{proof}

\begin{proof}[Proof of Theorem~\ref{main}]
    Due to the Hilbert-Schmidt embedding and the moment method established in Theorem~\ref{dist} and Remark~\ref{moment-method}, the convergence in law and the $G$-Gaussianity of the limit random field are similarly deduced. Let $f_{n}=\lambda_{n}^{-1/2}e_{n}$ be the orthonormal basis of the Sobolev space $H^{1}_{0}(D)$ with a mass-corrected inner product $\langle f_{1},f_{2}\rangle_{\nabla,m}:=\langle (\Delta-m^{2})^{1/2} f_{1},(\Delta-m^{2})^{1/2} f_{2}\rangle$. Note that this inner product $\langle \cdot,\cdot\rangle_{\nabla,m}$ generates the same $H^{1}_{0}(D)$ topology as the usual one $\langle \nabla\cdot,\nabla\cdot\rangle$. The expansion \eqref{mildd} had shown that if we replace $e_{n}$ by $f_{n}$, the mild solution $\phi$ is quasi-surely well-defined in $H^{1+s}_{0}(D)$. However, from the definition of the Euclidean bosonic field \eqref{bos}, we expect $\phi(t,\cdot)$ to have regularity similar to that of the $H^{1}_{0}(D)$ functions. More precisely, in a weaker sense than q.s. convergence, $\phi(t,\cdot)$ is a $G$-Gaussian random field parametrized by $H^{-1}_{0}(D)$ via the testing:
    \begin{equation}\label{7}
        [\phi(t),f]=[ P_{t}\psi,f]+\sum_{n\ge1}\phi_{n}(t)[ f_{n},f],\quad\forall f\in H^{-1}_{0}(D).
    \end{equation}
    In particular, the bracket quadratic form denotes the $L^{2}$ pairing between an $H^{1}_{0}(D)$ function $f$ and an $H^{-1}_{0}(D)$ function $g$: 
    \begin{equation}\label{the[]}
    [f,g]:=\langle (\Delta-m^{2})^{1/2}f,(\Delta-m^{2})^{-1/2}g\rangle.
    \end{equation}
    For simplicity of notations, we abbreviate the LHS of \eqref{7} by $\phi_{f}(t)$, thus
    \begin{align}\label{lim}
        \lim_{t\to\infty}\E\big(\phi_{f}(t)\phi_{g}(t)\big)&\le\lim_{t\to\infty}\sum_{j,k\ge1}\E\big(\phi_{j}(t)\phi_{k}(t)\big[ f_{j},f] [f_{k},g]\big)\notag\\
        &\le\frac{\bar{\sigma}^{2}}{2(\lambda_{1}+m^{2})}\iint_{D^{2}}G_{m}(x,y)f(x)g(y)dxdy.
    \end{align}
    The first inequality follows from the exponential decay of heat kernel \eqref{heat} and the subadditivity of expectation. The second bound \eqref{lim} is a direct consequence of Lemma~\ref{ouprocess} and the positive spectral gap of the finite volume Laplacian $-\Delta$, see also Lemma~\ref{lem:lap}. The covariance lower bound is similarly derived. In conclusion, the proof is completed by choosing appropriate $\alpha=\alpha(d,D,m)>0$ which only depends on the geometric setting and the mass of the quantum bosonic free field.
\end{proof}

\bibliographystyle{alphaabbr}
\bibliography{refs}

\def\cprime{$'$} \def\polhk#1{\setbox0=\hbox{#1}{\ooalign{\hidewidth
  \lower1.5ex\hbox{`}\hidewidth\crcr\unhbox0}}}

\end{document}